\newif\ifpartone
\partonetrue 

\makeatletter
\@namedef{subjclassname@2020}{%
  \textup{2020} Mathematics Subject Classification}
\makeatother

\documentclass[10pt]{amsart}

\usepackage{mathpazo}
\usepackage[mathpazo]{flexisym}
\usepackage{breqn}

\usepackage{verbatim}
\usepackage{graphicx}
\usepackage{xcolor}
\usepackage{amsfonts,amsmath,latexsym,amssymb,amsthm}
\usepackage{hyperref}

\textheight=22cm
\textwidth=13.5cm
\hoffset=-1cm
\parindent=16pt

\newcounter{theoremcounter}

\newcounter{dummycounter}
\newcounter{propcounter}
\newcounter{corcounter}

\newcounter{emptycounter}

\newtheorem{theorem}[theoremcounter]{Theorem}

\newtheorem{proposition}[propcounter]{Proposition}
\newtheorem{corollary}[corcounter]{Corollary}

\newcounter{eqncounter}

\numberwithin{equation}{eqncounter}

%

\newcommand{\vecmahlermap}{\ensuremath{{\mathfrak{M}}}}
\newcommand{\vecmahler}[1]{\ensuremath{\vecmahlermap(#1)}}
\newcommand{\absvecdelta}[1]{\ensuremath{|\Delta_{#1}|}}

\newcommand{\vecconstant}[1]{\ensuremath{C(#1)}}

\newcommand{\vectorlist}{\ensuremath{\boldsymbol{\alpha}}}
\newcommand{\vectori}[1]{\ensuremath{{\boldsymbol{\alpha_{#1}}}}}
\newcommand{\altvectori}[1]{\ensuremath{{\boldsymbol{a_{#1}}}}}

\newcommand{\smallervectorsi}[1]{\ensuremath{\beta_{#1}}}
\newcommand{\smallervector}{\ensuremath{{\boldsymbol{\beta}}}}
\newcommand{\littleconstant}{\ensuremath{C_{0}}}
\newcommand{\radius}{\ensuremath{r}}

\newcommand{\altroottwoi}[1]{\ensuremath{a_{#1}}}
\newcommand{\mahlerfuncmap}{\ensuremath{M}}
\newcommand{\mahlerfunc}[1]{\mahlerfuncmap(#1)}

\newcommand{\polynomial}{\ensuremath{f}}
\newcommand{\rooti}[1]{\ensuremath{\alpha_{#1}}}
\newcommand{\mahdelta}[1]{\ensuremath{\Delta_{#1}}}


%

\def\IR{\mathbb R}
\def\IC{\mathbb C}
\def\IZ{\mathbb Z}

\def\IQ{\mathbb Q}

%
%

%
%

%
%

%
%


\def\Oseen{{\mathcal{O}}}

%

%
%

\title{On Mahler's inequality and small integral generators of totally complex number fields} 


\begin{document}

\author{Murray Child}
\author{Martin Widmer}

\address{Department of Mathematics\\ 
Royal Holloway, University of London\\ 
TW20 0EX Egham\\ 
UK}

\email{murray.child.2020@live.rhul.ac.uk}
\email{martin.widmer@rhul.ac.uk}

\date{\today}

\subjclass[2020]{Primary 11R06; 30C10   Secondary 11G50; 11R04}

\keywords{Mahler measure, Mahler's inequality, discrete logarithmic energy, small generators, number fields, Weil height}

\maketitle

\begin{abstract}
We improve Mahler's lower bound for the Mahler measure in terms of the discriminant and degree 
for a specific class of polynomials: complex monic polynomials of degree $d\geq 2$ such that all roots with modulus greater than some fixed value $r\geq1$ occur in equal modulus pairs. We improve Mahler's exponent  $\frac{1}{2d-2}$ on the discriminant to $\frac{1}{2d-3}$.
Moreover, we show that this value is sharp, even when restricting to minimal polynomials of integral generators of a fixed not totally real number field.

An immediate consequence of this new lower bound is an improved lower bound for integral generators of number fields,
generalising a simple observation of Ruppert from imaginary quadratic to  totally complex number fields of arbitrary degree.

\end{abstract}

\section{Introduction}

In this short note we prove a new lower bound for the Mahler measure of a monic polynomial in $\IC[x]$
whose ``large'' roots come in pairs of equal modulus. The bound is expressed in terms of the degree and the discriminant, and the dependence on the discriminant  is best-possible. The result implies a new lower bound for the smallest integral
generator of a totally complex number field. 

We refer the reader to Smyth's survey article  \cite{m_measure_survey} (see also \cite{roots_of_unity}) for a detailed account on the Mahler measure and its significance.

Our proofs are completely elementary and straightforward but the results seem to close a gap in the literature.

Let $f=a_0(x-\alpha_1)\cdots(x-\alpha_d)\in \IC[x]$ be of degree $d\geq 2$.
The \textit{Mahler measure} $\mahlerfunc{f}$  and the \textit{discriminant} $\mahdelta{f}$ of $f$ are defined by
\begin{alignat*}1
    \mahlerfunc{f}:=|a_{0}|\prod_{i=1}^{d}\max\{{1,|\alpha_{i}|\}},\\
    \mahdelta{f}:= a_{0}^{2d-2}\prod_{1 \leq i < j \leq d}(\alpha_{i} - \alpha_{j})^{2}.
\end{alignat*}
Mahler's classical inequality bounds the former from below in terms of the latter.
\begin{theorem}[Mahler, 1964 \cite{mahler_1964}] \label{thm:mahleroriginal} 
Let $\polynomial \in \mathbb{C}[x]$ be a polynomial of degree $d \geq 2$. Then
\begin{equation*}
    \mahlerfunc{\polynomial}^{2d-2} \geq d^{-d} |\mahdelta{\polynomial}|.
\end{equation*}
\end{theorem}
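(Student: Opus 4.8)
The plan is to prove Mahler's classical inequality by bounding the discriminant directly in terms of the Mahler measure, exploiting the product structure of both quantities. First I would write the discriminant as $|\mahdelta{\polynomial}| = |a_0|^{2d-2}\prod_{i<j}|\alpha_i-\alpha_j|^2$ and attack each factor $|\alpha_i-\alpha_j|$ via the triangle inequality $|\alpha_i-\alpha_j|\leq |\alpha_i|+|\alpha_j|\leq 2\max\{1,|\alpha_i|\}\max\{1,|\alpha_j|\}$. This crude bound already converts a difference of roots into a product of the ``height'' factors that assemble into the Mahler measure, at the cost of a factor of $2$ per pair; but the constant this yields is $2^{d(d-1)}$, which is far worse than the sharp $d^d$ that Mahler's theorem claims, so the triangle inequality alone is too lossy and a more careful argument is needed.

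The better route, and the one I expect to be decisive, is to recognize that the product $\prod_{i<j}|\alpha_i-\alpha_j|^2$ is a Vandermonde-type determinant and to bound it using Hadamard's inequality. Concretely, the discriminant is (up to the leading coefficient) the square of the Vandermonde determinant $\det(\alpha_i^{j-1})_{1\leq i,j\leq d}$. Viewing this determinant as a function of the rows, Hadamard's inequality bounds $|\det|$ by the product of the Euclidean norms of the rows, where the $i$-th row is $(1,\alpha_i,\alpha_i^2,\ldots,\alpha_i^{d-1})$ with norm $\left(\sum_{k=0}^{d-1}|\alpha_i|^{2k}\right)^{1/2}$. Each such norm is at most $\sqrt{d}\,\max\{1,|\alpha_i|\}^{d-1}$, since every term $|\alpha_i|^{2k}$ is bounded by $\max\{1,|\alpha_i|\}^{2(d-1)}$ and there are $d$ terms. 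Taking the product over $i$ and squaring then produces exactly $d^{d}\prod_i \max\{1,|\alpha_i|\}^{2(d-1)}$, and reinstating the factor $|a_0|^{2d-2}$ gives $|\mahdelta{\polynomial}|\leq d^{d}\,\mahlerfunc{\polynomial}^{2d-2}$, which is the claimed inequality after rearrangement.

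The main obstacle, and the step I would check most carefully, is obtaining the clean Vandermonde factorization of the discriminant together with the correct bookkeeping of the leading coefficient $a_0$. The identity $\prod_{i<j}(\alpha_i-\alpha_j) = \pm\det(\alpha_i^{j-1})$ is standard, but one must verify the exact power of $a_0$ so that the discriminant's $a_0^{2d-2}$ matches the squared Vandermonde and no stray factors survive. Beyond that, the argument is entirely routine: it is a single application of Hadamard's inequality followed by the elementary estimate $\sum_{k=0}^{d-1}|\alpha|^{2k}\leq d\max\{1,|\alpha|\}^{2(d-1)}$, which is where the sharp constant $d^d$ arises and which explains why this is tight only when all roots have modulus comparable to each other. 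I would present the Hadamard step as the heart of the proof, since it is precisely the replacement of the wasteful pairwise triangle inequality by a global determinantal estimate that recovers the optimal exponent on $d$.
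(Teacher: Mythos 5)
Your proof is correct: the paper does not reprove this theorem but simply cites Mahler's 1964 article, and the argument you give — writing $\prod_{i<j}(\alpha_i-\alpha_j)^2$ as the squared Vandermonde determinant, applying Hadamard's inequality to the rows $(1,\alpha_i,\ldots,\alpha_i^{d-1})$, and using $\sum_{k=0}^{d-1}|\alpha_i|^{2k}\leq d\max\{1,|\alpha_i|\}^{2(d-1)}$ — is precisely Mahler's original proof, with the factor $|a_0|^{2d-2}$ accounted for correctly. Your opening remark that the naive pairwise triangle inequality only yields the constant $2^{d(d-1)}$ is also accurate, and correctly identifies the Hadamard step as the source of the sharp constant $d^d$.
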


Moreover, Mahler showed that we have equality if and only if $\polynomial = \altroottwoi{0}x^{d}+\altroottwoi{d}$ with $|\altroottwoi{0}|=|\altroottwoi{d}|>0$. Note that for irreducible $\polynomial \in \mathbb{Z}[x]$ this only happens when $\polynomial = \pm(x^{d}+1)$ and $d$ is a power of $2$. However, for primes $p$ 
 the irreducible polynomials $f = (p+1)x^d-p$ in $\mathbb{Z}[x]$ satisfy\footnote{A simple calculation gives $|\mahdelta{f}| = d^d\left(p(p+1)\right)^{d-1}$ and $\mahlerfunc{f} = p+1$.} 
$$\mahlerfunc{f}^{2d-2} = d^{-d}|\mahdelta{f}|\left(\frac{p+1}{p}\right)^{d-1},$$
and thus even when restricting to irreducible polynomials 
neither the exponent ${2d-2}$ nor the constant $d^{-d}$ can be improved.
What if we restrict to monic irreducible polynomials in $\IZ[x]$?
Using Swan's discriminant formula for trinomials \cite[Theorem 2]{Swan1962} we get,
for the $p$-Eisenstein polynomial $f=x^d+px^{d-1}+(-1)^{d+1}p$, 
$$|\mahdelta{f}|=
p^{d-1}\left(d^d+\left((d-1)p\right)^{d-1}\right)
> (d-1)^{d-1}p^{2d-2}.$$
On the other hand, $\mahlerfunc{f}\leq (2+1/p)p<3p$ (using inequality (\ref{ineq:L1norm})), and therefore
$$\mahlerfunc{f}^{2d-2}<\left(\frac{9}{d-1}\right)^{d-1}|\mahdelta{f}|.$$
Hence, at least the exponent $2d-2$ is sharp, even when restricting to monic  irreducible polynomials in $\IZ[x]$.

Next, let us consider a monic quadratic polynomial $\polynomial = x^2 + bx + c$ with coefficients $a,b \in \mathbb{R}$ and no real roots.
 Then, as noted by Ruppert \cite{ruppert},
\begin{equation} \label{quarter}
\mahlerfunc{\polynomial} = 
\max\left\{1, \frac{b^2+|\mahdelta{\polynomial}|}{4}\right\}\geq \frac{|\mahdelta{\polynomial}|}{4}.
\end{equation}
So instead of Mahler's exponent ${2d-2} = {2}$ one obtains the much better exponent $1$. This raises the question of whether one can improve Mahler's exponent ${2d-2}$ when restricting to monic $\polynomial \in \mathbb{R}[x]$ with no real roots - and in particular, what is the sharp exponent in this case? We have been unable to find the answer in the literature but our next result shows that for such polynomials the exponent can be improved to ${2d-3}$.

\begin{theorem} \label{thm:mainresultformahler}
Let $m$ be non-negative and even, let $r\geq1$, and let
$$\polynomial(x)=\left(\prod_{i=1}^{\frac{m}{2}}(x-\rooti{i})(x-\rooti{i}')\right)\left(\prod_{i=\frac{m}{2}+1}^{d-\frac{m}{2}}(x-\rooti{i})\right) \in \mathbb{C}[x]$$ 
be a monic polynomial of degree $d\geq2$, with $|\rooti{i}| = |\rooti{i}'|$ for all $i=1,\ldots,\frac{m}{2}$ and $|\rooti{i}| \leq r$ for all $i=\frac{m}{2}+1,\ldots,d-\frac{m}{2}$. Then
\begin{equation*}
    \mahlerfunc{\polynomial}^{2d-3} \geq (2r)^{d(1-d)}|\mahdelta{\polynomial}|.
\end{equation*}
\end{theorem}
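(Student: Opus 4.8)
The plan is to induct on the degree $d$, descending in steps of two by peeling off one equal-modulus pair of roots of modulus exceeding $r$. The starting point is Mahler's elementary estimate $|\alpha_i - \alpha_j| \le 2\max\{1,|\alpha_i|\}\max\{1,|\alpha_j|\}$, valid for all complex $\alpha_i,\alpha_j$, which already yields $|\Delta(f)| \le 2^{d(d-1)}M(f)^{2d-2}$. The single power of $M(f)$ we must gain is produced by the equal-modulus condition: if $\alpha_1,\alpha_1'$ form a pair with common modulus $\rho = |\alpha_1| = |\alpha_1'| > r \ge 1$, then the crude bound $|\alpha_1 - \alpha_1'| \le 2\rho$ beats the generic $2\rho^2$ by a factor $\rho$, and $\rho$ is precisely the quantity by which $M$ drops when this pair is removed, so one expects the exponent to fall from $2d-2$ to $2d-3$.

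Concretely, I would first dispose of the case in which every root has modulus at most $r$: then $|\Delta(f)| \le (2r)^{d(d-1)}$ while $M(f)^{2d-3} \ge 1$, so the inequality is immediate. Otherwise some root has modulus $> r$ and, by hypothesis, lies in an equal-modulus pair $\alpha_1,\alpha_1'$ with $\rho = |\alpha_1| > 1$. Writing $f(x) = (x-\alpha_1)(x-\alpha_1')g(x)$, the discriminant factorizes as $|\Delta(f)| = |\alpha_1 - \alpha_1'|^2\,|g(\alpha_1)|^2\,|g(\alpha_1')|^2\,|\Delta(g)|$, where $g$ again has the required shape in degree $d-2$ with the same $r$. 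I would then estimate $|\alpha_1 - \alpha_1'|^2 \le 4\rho^2$, bound each of $|g(\alpha_1)|$ and $|g(\alpha_1')|$ by $2^{d-2}\rho^{d-2}M(g)$ via the elementary inequality above, and invoke the inductive hypothesis $|\Delta(g)| \le (2r)^{(d-2)(d-3)}M(g)^{2d-7}$. Collecting exponents and using $M(f) = \rho^2 M(g)$, the powers of $\rho$ cancel exactly while the powers of $M(g)$ assemble into $M(f)^{2d-3}$; the resulting constant is $2^{4d-6}(2r)^{(d-2)(d-3)}$, which is at most $(2r)^{d(d-1)}$ because $2 \le 2r$ and $(4d-6) + (d-2)(d-3) = d(d-1)$.

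The point requiring care is the base of the induction rather than the inductive step. Since the degree drops by two, the recursion terminates at degree two or three, and these two cases must be checked by hand; the analogous reduction to degree one must be avoided, since the naive degree-one instance of the bound is false once the remaining small root has modulus in $(1,r]$. For $d=2$ the statement is a scaled version of Ruppert's identity, and for $d=3$ a direct estimate of the three root differences, using $|\alpha_i - \alpha_3| \le 2\rho\max\{1,|\alpha_3|\}$ for the small root $\alpha_3$, reduces the claim to the scalar inequality $\max\{1,|\alpha_3|\} \le r \le r^6$. I expect the only genuine subtlety to be verifying that the $\rho$-powers cancel so that the exponent lands precisely on $2d-3$ rather than $2d-2$; this is exactly what forces the induction to peel the pairs one at a time, since applying Mahler's bound to $g$ directly loses the structure of the large roots of $g$ and recovers only the weaker exponent.
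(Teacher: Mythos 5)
Your argument is correct, but it is organized differently from the paper's. Both proofs rest on the same two estimates --- the elementary bound $|\alpha-\beta|\le 2\max\{1,|\alpha|\}\max\{1,|\beta|\}$ for the generic root differences, and the crucial saving $|\alpha_i-\alpha_i'|\le 2|\alpha_i|$ for an equal-modulus pair, which is where the exponent drops from $2d-2$ to $2d-3$ --- and both arrive at the same constant $(2r)^{d(d-1)}$ via the identity $(4d-6)+(d-2)(d-3)=d(d-1)$. The difference is bookkeeping: the paper expands the whole discriminant at once, bounds every factor, and checks that the total exponent of each large modulus $|\alpha_i|$ is $4d-8i+2\le 2(2d-3)$, discarding the non-positive excess $|\alpha_i|^{-8i+8}\le 1$; you instead peel off one pair per step and let the inductive hypothesis absorb the remaining pairs, so only the top pair's exponent ever needs to be balanced against $M(f)^{2d-3}=\rho^{4d-6}M(g)^{2d-3}$. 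Your route makes the exponent arithmetic more transparent, but it forces you to identify base cases, and you handle this correctly: the all-small-roots case is trivial, degrees $2$ and $3$ are checked by hand, and you rightly observe that the formal degree-$1$ statement is false (so the recursion must terminate at degree $3$ rather than descend to $1$) --- a pitfall the paper's non-inductive computation never encounters. One further remark: the paper actually proves the result for point configurations in $\mathbb{R}^k$ (Theorem \ref{thm:mainresultforvecs}); since your proof uses nothing beyond the triangle inequality and the factorization of $\vecmahler{\vectorlist}$ under removal of a pair, it generalizes verbatim to that setting.
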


The exponent $2d-3$  in Theorem \ref{thm:mainresultformahler} is sharp, even when restricting to minimal polynomials of integral generators of a given not totally real number field. 

Let $K$ be a number field of degree $d$ and $\Oseen_K$ its ring of integers. 
For $\alpha \in K$ we write $f_{\alpha,\mathbb{Z}}$ for the minimal polynomial\footnote{The minimal polynomial $f_{\alpha,\mathbb{Z}}$ of $\alpha$ over $\mathbb{Z}$ is the unique  polynomial in $\IZ[x]$ of minimal degree
with positive leading coefficient and coprime coefficients that vanishes at $\alpha$.} of $\alpha$ over $\mathbb{Z}$. Recall that $\alpha \in \Oseen_K$ if and only if $f_{\alpha,\mathbb{Z}}$ is monic.
\begin{proposition} \label{thm:kcomplementtomahlerresult}
Let $K$ be a number field of degree $d$ with a non-real embedding. Then there exists $c_K > 0$ depending only on $K$ such that there are infinitely many $\alpha \in \Oseen_K$ with $K = \mathbb{Q}(\alpha)$,
\begin{equation*}
    M(f_{\alpha,\mathbb{Z}})^{2d-3} \leq c_K|\Delta_{f_{\alpha,\mathbb{Z}}}|,
\end{equation*}
and all roots of $f_{\alpha,\mathbb{Z}}$, except one pair of complex conjugate roots, have modulus at most $c_K$.
\end{proposition}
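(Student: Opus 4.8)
The plan is to construct the required elements by perturbing a fixed generator of $K$ in a single complex place, using the geometry of the Minkowski embedding. Write $\iota\colon K\to\mathbb{R}^{r_1}\times\mathbb{C}^{r_2}=:V$ for the Minkowski embedding, where $r_1,r_2$ are the numbers of real and complex places; since $K$ has a non-real embedding, $r_2\ge1$. Fix one complex embedding $\sigma$ and denote by $z_1$ the corresponding complex coordinate on $V$, collecting the remaining $d-2$ coordinate functions (the other embeddings) into a set $S$. The case $d=2$ is imaginary quadratic and is essentially Ruppert's: taking $\alpha=n\theta$ for a fixed generator $\theta\in\Oseen_K$ and $n\to\infty$ scales both conjugates, so that $M(f_{\alpha,\mathbb{Z}})=n^2|\theta|^2$ and $|\Delta_{f_{\alpha,\mathbb{Z}}}|=n^2|\theta-\bar\theta|^2$, whence $M(f_{\alpha,\mathbb{Z}})^{2d-3}=M(f_{\alpha,\mathbb{Z}})\le c_K|\Delta_{f_{\alpha,\mathbb{Z}}}|$ with $c_K=|\theta|^2/|\theta-\bar\theta|^2$, and the modulus condition is vacuous. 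So assume $d\ge3$.

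First I fix a generator $\theta\in\Oseen_K$ of $K$; its conjugates are pairwise distinct, so the $d-2$ coordinates in $S$ are separated, say by some $\delta_0>0$. Next I produce a perturbation $\xi\in\Oseen_K$ that is small in every coordinate of $S$ and in $\operatorname{Re}(z_1)$, but has large $\operatorname{Im}(z_1)$. To this end consider the projection $\pi\colon V\to\mathbb{R}^{d-1}$ that forgets only the coordinate $\operatorname{Im}(z_1)$. For $d\ge3$ there is at least one embedding outside the chosen conjugate pair, and since field embeddings are injective the only lattice point of $\iota(\Oseen_K)$ lying on the forgotten line is $0$; hence $\pi$ is injective on $\iota(\Oseen_K)$. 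Its image therefore has rank $d$ inside $\mathbb{R}^{d-1}$ and so cannot be discrete, whence $0$ is an accumulation point. Fixing a small $\varepsilon\in(0,\delta_0/4)$ I thus obtain infinitely many $\xi\in\Oseen_K$ with $|\pi(\iota(\xi))|\le\varepsilon$; as a bounded region of $V$ contains only finitely many lattice points, these $\xi$ must satisfy $|\operatorname{Im}(z_1(\xi))|\to\infty$.

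Now set $\alpha=\theta+\xi\in\Oseen_K$ and put $T:=|\operatorname{Im}(z_1(\alpha))|$. By construction the $S$-conjugates of $\alpha$ stay within $\varepsilon$ of those of $\theta$, hence are bounded by some $C_0=C_0(K)$ and remain separated by at least $\delta_0/2$; moreover $\operatorname{Re}(z_1(\alpha))$ stays bounded while $T\to\infty$, so $|z_1(\alpha)|\asymp T$ and the one conjugate pair $z_1(\alpha),\overline{z_1(\alpha)}$ satisfies $|z_1(\alpha)-\overline{z_1(\alpha)}|=2T$. All $d$ conjugates are then distinct, so $K=\mathbb{Q}(\alpha)$ and $f_{\alpha,\mathbb{Z}}$ is monic of degree $d$ with exactly one large complex-conjugate pair and all other roots of modulus at most $C_0$. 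A direct estimate gives $M(f_{\alpha,\mathbb{Z}})\le c_1 T^2$ (the pair contributes $\asymp T^2$, the bounded roots a factor at most $C_1^{d-2}$ where $C_1=\max\{1,C_0\}$), while bounding each factor of the discriminant from below---$(2T)^2$ from the pair, at least $(T/2)^2$ from each of the $2(d-2)$ cross terms, and at least $(\delta_0/2)^2$ from each pair inside $S$---yields $|\Delta_{f_{\alpha,\mathbb{Z}}}|\ge c_2 T^{4d-6}$. Since $M(f_{\alpha,\mathbb{Z}})^{2d-3}\le c_1^{2d-3}T^{2(2d-3)}=c_1^{2d-3}T^{4d-6}$, the two estimates combine to $M(f_{\alpha,\mathbb{Z}})^{2d-3}\le c_K|\Delta_{f_{\alpha,\mathbb{Z}}}|$ with $c_K$ depending only on $K$, and enlarging $c_K$ to be at least $C_0$ covers the modulus claim.

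The one delicate point is keeping the large conjugate off the real axis, i.e.\ ensuring $|\operatorname{Im}(z_1(\alpha))|\asymp|z_1(\alpha)|$; otherwise the discriminant factor coming from the conjugate pair could be too small and the exponent $2d-3$ would not be attained. This is precisely why I project away $\operatorname{Im}(z_1)$ rather than the whole coordinate $z_1$ when producing $\xi$: it forces $\operatorname{Re}(z_1)$ to stay bounded, so all growth is in the imaginary direction. Separation of the remaining roots---the second place where a naive construction fails, since clustering roots would shrink the discriminant and again destroy the exponent---is handled cleanly by letting the fixed generator $\theta$ supply the separation and using $\xi$ only as a small perturbation.
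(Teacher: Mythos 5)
Your proof is correct and follows essentially the same strategy as the paper: both use the Minkowski embedding to produce integral generators whose conjugates are bounded and well separated except for one complex-conjugate pair whose imaginary part tends to infinity, and then the same estimates $M(f_{\alpha,\mathbb{Z}})\ll_K T^2$ and $|\Delta_{f_{\alpha,\mathbb{Z}}}|\gg_K T^{4d-6}$. The only real difference is the mechanism for producing the lattice point: the paper places a box of fixed side length (at least the covering radius of $\boldsymbol{\sigma}\Oseen_K$) at height $T$ in the last complex coordinate, whereas you perturb a fixed generator by an algebraic integer that is small in every Minkowski coordinate except one imaginary direction, obtained from the non-discreteness of the rank-$d$ projection of the lattice to $\mathbb{R}^{d-1}$; both mechanisms work.
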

Under the hypothesis that the number field $K$ possesses a real embedding one can prove in the same way that there are 
infinitely many $\alpha \in \Oseen_K$ with $K = \mathbb{Q}(\alpha)$ and $M(f_{\alpha,\mathbb{Z}})^{2d-2} \leq c_K|\Delta_{f_{\alpha,\mathbb{Z}}}|$,
and all roots of $f_{\alpha,\mathbb{Z}}$ except one  have modulus at most $c_K$.

Next we apply Theorem \ref{thm:mainresultformahler} to get a new lower bound for integral generators of totally complex number fields.

Let $K$ be a number field. Then every $\alpha\in K$ with $K=\mathbb{Q}(\alpha)$  is called a \textit{generator} of $K$. It is natural to ask: how ``large" must a generator of $K$ be in terms of the degree $d$ and the modulus of the discriminant $|\Delta_{K}|$? And what happens if we restrict to \textit{integral} generators? 
This problem  has been studied by several authors, including Cochrane et al. \cite{real_quadratic}, Dubickas \cite{dubickas}, Eldredge \& Petersen \cite{eldandpet},  Kihel \& Lizotte \cite{kihel_lizotte}, Pierce \& Turnage-Butterbaugh \& Wood \cite{pierce_tb_wood_classgroups}, Ruppert \cite{ruppert}, and Vaaler \& Widmer \cite{wout_smol_gens, a_note}.

A good measure for size here is the Mahler measure $M(f_{\alpha,\mathbb{Z}})$ of the minimal polynomial $f_{\alpha,\mathbb{Z}}$ of $\alpha$ over $\mathbb{Z}$. Following  Eldredge \& Petersen \cite{eldandpet} and Dubickas \cite{dubickas}  we use the following notation:
\begin{equation*}
M(K) := \text{min}\{M(f_{\alpha,\mathbb{Z}}): \alpha \in K, \mathbb{Q}(\alpha) = K\},
\end{equation*}
and
\begin{equation*}
M(\Oseen_K) := \text{min}\{M(f_{\alpha,\mathbb{Z}}): \alpha \in \Oseen_K, \mathbb{Q}(\alpha) = K\}.
\end{equation*}
Writing $\|f\|_{1}$ for the $L^1$-norm of the coefficient vector of $f$
one has by a result of Mahler \cite{mahler_1960}
\begin{equation}\label{ineq:L1norm}
2^{-d} \|f\|_{1} \leq M(f) \leq \|f\|_{1}
\end{equation}
for each $f \in \mathbb{C}[x]$ of degree $d \geq 1$. Hence there are only finitely many polynomials $f \in \mathbb{Z}[x]$ of degree at most $D$ and $M(f) \leq T$, irrespective of how large $D$ and $T$ are. This shows that the minima in the definitions of $M(K)$ and $M(\Oseen_K)$ exist. 

Now recall that if $\alpha\in \Oseen_K$ with $K=\IQ(\alpha)$ then $f_{\alpha,\mathbb{Z}}$ is monic and $\mahdelta{f_{\alpha,\mathbb{Z}}}=\Delta_K [\Oseen_K:\IZ[\alpha]]^2$
where the positive integer $[\Oseen_K:\IZ[\alpha]]$ is the index of the order $\IZ[\alpha]$ in the maximal order $\Oseen_K$.
Therefore Mahler's inequality (Theorem \ref{thm:mahleroriginal})  implies
\begin{equation} \label{eqn:mahlerok}
M(\Oseen_K) \geq d^{-\frac{d}{2d-2}}|\Delta_K|^{\frac{1}{2d-2}}.
\end{equation}
Silverman \cite{silverman} proved this bound holds even for $M(K)$, and it is known\footnote{If $p<q<2p$ are two primes and $K=\IQ((p/q)^{1/d})$ then $M(K)\leq q
< (2pq)^{1/2}\leq 2^{1/2}|\Delta_K|^{1/(2d-2)}$.} (cf. \cite{ruppert}) that the exponent $\frac{1}{2d-2}$ on $|\Delta_K|$ is sharp for $M(K)$ and every $d\geq 2$.
 
Ruppert \cite{ruppert} showed that $M(K) \ll_d |\Delta_K|^{\frac{1}{2d-2}}$ when $d=2$ and asked \cite[Question 1]{ruppert} whether this remains true when $d> 2$. This was answered in the negative by Vaaler \& Widmer \cite{wout_smol_gens} for composite $d$,  and by Dubickas \cite{dubickas} for prime $d\geq 3$. Ruppert \cite[Question 2]{ruppert} asked also the analogous
{question with exponent $1/2$ instead of $1/(2d-2)$. This question is still open but has been affirmatively answered for not totally complex number fields, and, 
conditionally under GRH,  also for general number fields \cite{a_note}.

Less is known for $M(\Oseen_K)$.
The best  general upper bound is
\begin{equation*}
    M(\Oseen_K) \leq |\Delta_K|
\end{equation*}
which follows easily from Minkowski's convex body theorem (cf. \cite[Lemma 7.1]{pazuki}).
The question of whether the exponent $1/(2d-2)$ in (\ref{eqn:mahlerok}) is sharp seems more delicate than its counterpart for $M(K)$. 
The cubic case was affirmatively answered by Eldredge \& Petersen \cite[Theorem 1.1]{eldandpet}.
Motivated by a different question Jones published a result \cite[Theorem 1.5 (3)]{Jones} that gives an affirmative answer for all $d\in \{3,4,5,7,9\}$.
He shows that if $(d,w)\in \{(3,1), (4,2), (5,1), (7,1), (9,2)\}$ then there are infinitely many primes $t$ such that 
\begin{equation*}
P_{t}(x)=x^d-16d(dt+w)x^{d-1}+dt+w\in \IZ[x]
\end{equation*}
is irreducible and monogenic, i.e., $\Oseen_K=\IZ[\alpha]$ where $K=\IQ(\alpha)$ and $\alpha$ is any root of $P_{t}(x)$.
Hence, $\Delta_{P_{t}}=\Delta_K$, and Swan's discriminant formula shows that $|\Delta_{P_{t}}|\gg t^{2d-2}$ while
(\ref{ineq:L1norm}) implies that $M(\Oseen_K)\leq M(P_{t})\ll t$.

However, just as for  $M(K)$, the exponent $2d-2$ is certainly not always sharp when $d>2$. 
Eldredge \& Petersen \cite[Theorem 1.2]{eldandpet} proved for cubic, and Dubickas\footnote{Dubickas proves the sharper inequality $(1-\epsilon)|\Delta_K|^{\frac{1}{d}} < M(\Oseen_K) < |\Delta_K|^{\frac{1}{d}}$ for arbitray $\epsilon>0$.} \cite[Theorem 2]{dubickas} for arbitrary degrees $d \geq 2$, that there are infinitely many number fields $K$ of degree $d$ such that
\begin{equation*}
\frac{1}{30}|\Delta_K|^{\frac{1}{d}} < M(\Oseen_K) < \frac{4}{3}|\Delta_K|^{\frac{1}{d}}.
\end{equation*}
These fields $K$ are very special and are all of the form  $\IQ(p^{1/d})$ for certain primes $p$. 
It would be interesting to find improved lower bounds that apply to more general families of number fields.
Ruppert \cite{ruppert}  observed that for all imaginary quadratic fields
\begin{equation} \label{eqn:imaginarybound}
    M(\Oseen_K) \geq \frac{1}{4}|\Delta_K|.
\end{equation}
The bound (\ref{eqn:imaginarybound}) suggests that  such improvements of Mahler's exponent $1/(2d-2)$ 
might hold for the  family of totally complex fields of any fixed degree $d\geq 2$. 

Theorem \ref{thm:mainresultformahler} with $m=d/2$ and $r=1$ applied to  the minimal polynomial of  integral generators 
yields a generalisation of  Ruppert's observation  (\ref{eqn:imaginarybound}) to totally complex number fields
of arbitrary degree $d$.
\begin{corollary} \label{cor:numberfieldmahler}
Let $K$ be a totally complex number field of degree $d \geq 2$. Then
\begin{equation*}
    M(\Oseen_K) \geq 2^{\frac{d(1-d)}{2d-3}}|\Delta_K|^{\frac{1}{2d-3}}.
\end{equation*}
\end{corollary}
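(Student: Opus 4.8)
The plan is to apply Theorem~\ref{thm:mainresultformahler} directly to the minimal polynomial of a minimizing integral generator. Since the minimum defining $M(\Oseen_K)$ is attained (as noted after~(\ref{ineq:L1norm})), I would choose $\alpha\in\Oseen_K$ with $\IQ(\alpha)=K$ and $M(f_{\alpha,\IZ})=M(\Oseen_K)$, and write $f=f_{\alpha,\IZ}$. Because $\alpha$ is an algebraic integer generating $K$, the polynomial $f$ is monic of degree $d$. The key structural input is that $K$ is totally complex: it has no real embeddings, so $d$ is even and the $d$ roots of $f$ come in $d/2$ complex-conjugate pairs $\{\alpha_i,\overline{\alpha_i}\}$.

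Next I would invoke Theorem~\ref{thm:mainresultformahler} with $m=d$ and $r=1$. The choice $m=d$ is admissible (non-negative and even), the pairing $\alpha_i'=\overline{\alpha_i}$ satisfies $|\alpha_i|=|\alpha_i'|$ for every $i$ since conjugate roots have equal modulus, and the second product in the theorem—running from $\tfrac{d}{2}+1$ to $\tfrac{d}{2}$—is empty, so the hypothesis on the small roots is vacuous and any $r\geq1$ is allowed; taking $r=1$ gives the strongest conclusion. The theorem then yields
$$M(f)^{2d-3}\geq 2^{d(1-d)}|\Delta_f|.$$

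Finally I would pass from the polynomial discriminant to the field discriminant. Since $\alpha\in\Oseen_K$ with $\IQ(\alpha)=K$, we have $\Delta_f=\Delta_K[\Oseen_K:\IZ[\alpha]]^2$, and as the index is a positive integer this gives $|\Delta_f|\geq|\Delta_K|$. Combining with the previous display and taking $(2d-3)$-th roots—legitimate because $2d-3\geq1>0$ for $d\geq2$—produces
$$M(\Oseen_K)\geq 2^{\frac{d(1-d)}{2d-3}}|\Delta_K|^{\frac{1}{2d-3}},$$
which is the asserted bound.

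There is essentially no hard step here: the corollary is a clean specialization of Theorem~\ref{thm:mainresultformahler}. The only points needing a word of justification are that total complexity forces $d$ even (so that $m=d$ is a legal even choice) and that complex-conjugate roots share the same modulus—both immediate. As a consistency check, for $d=2$ the bound reads $M(\Oseen_K)\geq 2^{-2}|\Delta_K|=\tfrac14|\Delta_K|$, recovering Ruppert's observation~(\ref{eqn:imaginarybound}), so the corollary is indeed the promised generalisation from imaginary quadratic fields to totally complex fields of arbitrary degree.
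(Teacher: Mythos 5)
Your proposal is correct and follows exactly the route the paper intends (the paper leaves this corollary as an immediate consequence of Theorem~\ref{thm:mainresultformahler}): apply the theorem with $m=d$ and $r=1$, using that total complexity forces the roots into equal-modulus conjugate pairs, and then pass to $|\Delta_K|$ via $\Delta_{f_{\alpha,\mathbb{Z}}}=\Delta_K[\Oseen_K:\IZ[\alpha]]^2$. The $d=2$ sanity check against Ruppert's bound is a nice touch; nothing is missing.
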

From number fields $K$ back to polynomials $f\in \IC[x]$, here is another application of Theorem \ref{thm:mainresultformahler}. 
Mahler \cite[Corollary]{mahler_1964}  combined  Theorem \ref{thm:mahleroriginal} and (\ref{ineq:L1norm}) to get 
\begin{equation}\label{ineq:L1disc}
|\mahdelta{\polynomial}|\leq d^d\|f\|_{1}^{2d-2}.
\end{equation}
For polynomials $f$ as in Theorem \ref{thm:mainresultformahler} we get a better bound whenever $\|f\|_1$ is sufficiently large.
\begin{corollary} \label{cor:L1normdisc}
For polynomials $f\in \IC[x]$ satisfying the conditions in Theorem \ref{thm:mainresultformahler} we have
\begin{equation*}
|\mahdelta{\polynomial}| \leq (2r)^{d(d-1)}\|f\|_1^{{2d-3}}.
\end{equation*}
\end{corollary}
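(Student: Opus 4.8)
The plan is to combine Theorem~\ref{thm:mainresultformahler} with Mahler's comparison between the Mahler measure and the $L^1$-norm. The key observation is that Corollary~\ref{cor:L1normdisc} is exactly the analogue of Mahler's inequality (\ref{ineq:L1disc}), but using the improved exponent $2d-3$ from Theorem~\ref{thm:mainresultformahler} in place of Mahler's $2d-2$, and replacing the constant $d^d$ by $(2r)^{d(d-1)}$.

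First I would invoke Theorem~\ref{thm:mainresultformahler}, which for $f$ satisfying the stated hypotheses gives
\begin{equation*}
\mahlerfunc{\polynomial}^{2d-3} \geq (2r)^{d(1-d)}|\mahdelta{\polynomial}|,
\end{equation*}
equivalently $|\mahdelta{\polynomial}| \leq (2r)^{d(d-1)}\mahlerfunc{\polynomial}^{2d-3}$. Then I would apply the upper bound in (\ref{ineq:L1norm}), namely $\mahlerfunc{\polynomial}\leq \|f\|_1$, which holds for any $f\in\IC[x]$ of degree $d\geq 1$ and in particular for the monic $f$ here. Since $2d-3\geq 1$ for $d\geq 2$, raising this inequality to the power $2d-3$ is legitimate and monotone, giving $\mahlerfunc{\polynomial}^{2d-3}\leq \|f\|_1^{2d-3}$. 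Chaining the two inequalities yields
\begin{equation*}
|\mahdelta{\polynomial}| \leq (2r)^{d(d-1)}\mahlerfunc{\polynomial}^{2d-3} \leq (2r)^{d(d-1)}\|f\|_1^{2d-3},
\end{equation*}
which is precisely the claimed bound.

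There is essentially no obstacle here: the statement is a direct two-line consequence of a result already proved and a standard norm inequality, and the only things to check are that the hypotheses of Theorem~\ref{thm:mainresultformahler} are assumed verbatim (so it applies) and that the exponent $2d-3$ is at least $1$ so that the monotone power step is valid. The ``better bound whenever $\|f\|_1$ is sufficiently large'' remark in the surrounding text is not part of the formal claim and needs no proof; it merely compares $(2r)^{d(d-1)}\|f\|_1^{2d-3}$ with Mahler's $d^d\|f\|_1^{2d-2}$, where the lower exponent on $\|f\|_1$ wins once $\|f\|_1$ exceeds a threshold depending on $d$ and $r$.
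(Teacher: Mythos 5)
Your proposal is correct and is exactly the argument the paper intends: rearrange Theorem \ref{thm:mainresultformahler} to $|\mahdelta{\polynomial}| \leq (2r)^{d(d-1)}\mahlerfunc{\polynomial}^{2d-3}$ and then apply the upper bound $\mahlerfunc{\polynomial}\leq \|f\|_1$ from (\ref{ineq:L1norm}), mirroring how Mahler derived (\ref{ineq:L1disc}) from Theorem \ref{thm:mahleroriginal}. The paper leaves this two-line deduction implicit, and your write-up fills it in with no gaps.
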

Note that if $K$ is a not totally real number field of degree $d$,
and  $f=f_{\alpha,\mathbb{Z}}$ 
is as in Proposition \ref{thm:kcomplementtomahlerresult}, then  
\begin{equation*}
\|f\|_1^{{2d-3}}\ll_K |\mahdelta{\polynomial}|.
\end{equation*}

\section{Discrete logarithmic energy for point configurations}
Theorem \ref{thm:mainresultformahler} is proved by considering the complex roots of $f$ as points in $\IR^2$, and then bounding the logarithmic energy of this point configuration in $\IR^2$
from below in terms of the number of points and the product of their Euclidean norms (ignoring those points inside the unit disc).
Our simple argument is agnostic to the dimension and thus works for point configurations in arbitrary dimensions.

Let $k,d \in \mathbb{N}$, $d\geq 2$, let  $\altvectori{1}, \ldots, \altvectori{d}$ 
be points in $\mathbb{R}^k$, and set  $\vectorlist = (\altvectori{1}, \ldots, \altvectori{d})$. 
We define
\begin{equation*}
    \vecmahler{\vectorlist} := \prod_{i=1}^{d}\max\{1, |\altvectori{i}|\},
\end{equation*}
and
\begin{equation*}
    \absvecdelta{\vectorlist} := \prod_{1 \leq i < j \leq d}|\altvectori{i} - \altvectori{j}|^{2},
\end{equation*}
where $|\cdot|$ on the right hand-side denotes the Euclidean norm on  $\mathbb{R}^k$.

Note that $-\log|\Delta_{\vectorlist}|$ is the discrete logarithmic energy $ E_{\log}(\vectorlist)$ of the $d$-point configuration $\vectorlist$ in $\mathbb{R}^k$ (cf. \cite{grabner}),
and thus our goal is to bound $E_{\log}(\vectorlist)$ from below in terms of $\vecmahler{\vectorlist}$ and $d$. 
Mahler's inequality shows that $E_{\log}(\vectorlist)$ for a $d$-point configuration of points $\vectorlist$ on the closed unit disc in $\mathbb{R}^2$ is at least $-d\log d$,
and that this value is attained if and only if the points are equidistributed on the unit circle. But  finding the minimal logarithmic energy $E_0(d;k)$
for a  $d$-point configuration in the closed unit ball in $\mathbb{R}^k$ for higher dimensions $k\geq 3$
is a difficult open problem
about which we have nothing to say.

Let $\radius \geq 1$ be a real number, and let $m \in \{0,1,\ldots,d\}$ be the number of points $\altvectori{i}$ in $\vectorlist$ with Euclidean length strictly greater than $\radius$.
We prove the following generalisation of Theorem \ref{thm:mainresultformahler}.
\begin{theorem} \label{thm:mainresultforvecs}
Suppose $m$ is even and that the points in $\vectorlist$ with Euclidean length greater than $\radius$ occur in pairs of equal length. Then
\begin{equation} \label{eqn:vecresult}
    \vecmahler{\vectorlist}^{2d-3} \geq (2r)^{d(1-d)}|\mahdelta{\vectorlist}|.
\end{equation}
Moreover, the exponent $2d-3$ is sharp.
\end{theorem}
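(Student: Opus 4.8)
The plan is to establish the equivalent inequality $(2r)^{d(d-1)}\,\mathfrak{M}(\boldsymbol{\alpha})^{2d-3}\ge |\Delta_{\boldsymbol{\alpha}}|$ by induction on $d$, the engine being the trivial bound $|\boldsymbol{a_i}-\boldsymbol{a_j}|\le 2\max\{|\boldsymbol{a_i}|,|\boldsymbol{a_j}|\}$ together with the observation that an equal-length pair of large points contributes only two factors to $|\Delta_{\boldsymbol\alpha}|$ while contributing a full square to $\mathfrak{M}(\boldsymbol\alpha)$. First I would dispose of the case $m=0$ (all points of length $\le r$): here every factor satisfies $|\boldsymbol{a_i}-\boldsymbol{a_j}|\le 2r$, so $|\Delta_{\boldsymbol\alpha}|\le (2r)^{d(d-1)}$, and since $\mathfrak{M}(\boldsymbol\alpha)\ge 1$ and $2d-3\ge 1$ the claim is immediate.

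For $m\ge 2$ I would peel off a \emph{largest} equal-length pair. Relabel so that $\boldsymbol{a_1},\boldsymbol{a_2}$ have equal length $L=|\boldsymbol{a_1}|=|\boldsymbol{a_2}|>r$ with $L=\max_i|\boldsymbol{a_i}|$, and set $\boldsymbol{\beta}=(\boldsymbol{a_3},\dots,\boldsymbol{a_d})$, which again satisfies the hypotheses with $m-2$ large points. Splitting off the factors involving the indices $1$ and $2$, the maximality $|\boldsymbol{a_j}|\le L$ gives $|\boldsymbol{a_1}-\boldsymbol{a_2}|\le 2L$ and $|\boldsymbol{a_1}-\boldsymbol{a_j}|,|\boldsymbol{a_2}-\boldsymbol{a_j}|\le 2L$, whence
\begin{equation*}
|\Delta_{\boldsymbol\alpha}|\le 4L^2\cdot(16L^4)^{d-2}\,|\Delta_{\boldsymbol\beta}|=2^{4d-6}L^{4d-6}\,|\Delta_{\boldsymbol\beta}|.
\end{equation*}
Since $\mathfrak{M}(\boldsymbol\alpha)=L^2\,\mathfrak{M}(\boldsymbol\beta)$, feeding in the inductive bound $|\Delta_{\boldsymbol\beta}|\le (2r)^{(d-2)(d-3)}\mathfrak{M}(\boldsymbol\beta)^{2d-7}$ and cancelling the common power $\mathfrak{M}(\boldsymbol\beta)^{2d-7}$ reduces the claim to
\begin{equation*}
2^{4d-6}L^{4d-6}(2r)^{(d-2)(d-3)}\le (2r)^{d(d-1)}L^{4d-6}\mathfrak{M}(\boldsymbol\beta)^{4}.
\end{equation*}
This holds because the two exponent computations $d(d-1)-(d-2)(d-3)=4d-6$ and $4\cdot 16^{d-2}=2^{4d-6}$ conspire so that the powers of $L$ cancel and the surviving factor is exactly $r^{4d-6}\mathfrak{M}(\boldsymbol\beta)^{4}\ge 1$.

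The step I expect to be the main obstacle is arranging the recursion so that it never reduces to a single leftover point: the intermediate claim is genuinely false for a lone point of length in $(1,r]$ (there the desired inequality reads $\mathfrak{M}^{-1}\ge 1$, which fails), so the induction cannot be based at $d=1$. I would resolve this by taking $d=2$ and $d=3$ as base cases, checked directly in each of the sub-cases $m=0$ and $m=2$, and invoking the peeling step only for $d\ge 4$, where removing a pair lands on $d-2\ge 2$ points so that the honest inductive hypothesis applies. One must also verify that peeling a \emph{full} pair preserves the equal-length-pair hypothesis for $\boldsymbol\beta$, which it does since a whole pair is removed at once.

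Finally, for sharpness I would exhibit an explicit family showing $2d-3$ cannot be lowered. Working in $\mathbb{R}^2$, fix $d-2$ distinct points of norm at most $r$ and let the remaining two be $\boldsymbol{a_1}=(L,0)$ and $\boldsymbol{a_2}=(0,L)$ with $L\to\infty$. Then $\mathfrak{M}(\boldsymbol\alpha)=c_0L^2$ for a fixed $c_0\ge 1$, while every factor of $|\Delta_{\boldsymbol\alpha}|$ involving index $1$ or $2$ grows like a constant times $L^2$, so that $|\Delta_{\boldsymbol\alpha}|\asymp L^{4d-6}\asymp \mathfrak{M}(\boldsymbol\alpha)^{2d-3}$. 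Hence $|\Delta_{\boldsymbol\alpha}|/\mathfrak{M}(\boldsymbol\alpha)^{2d-3}$ stays bounded away from $0$, so for any $\varepsilon>0$ no inequality of the form $\mathfrak{M}(\boldsymbol\alpha)^{2d-3-\varepsilon}\ge C|\Delta_{\boldsymbol\alpha}|$ can hold for all admissible configurations, which is precisely the asserted sharpness.
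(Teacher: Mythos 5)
Your argument is correct, and it rests on exactly the same key estimate as the paper --- the triangle inequality $|\boldsymbol{a_i}-\boldsymbol{a_j}|\le 2\max\{|\boldsymbol{a_i}|,|\boldsymbol{a_j}|\}$ applied to an equal-length pair and to its interactions with the smaller points, so that each pair contributes at most $2^{4d-6}L^{4d-6}$ to $|\Delta_{\boldsymbol{\alpha}}|$ while contributing $L^2$ to $\mathfrak{M}(\boldsymbol{\alpha})$ --- and your sharpness example (two antipodal-type large points plus $d-2$ fixed distinct points of norm at most $r$) is the paper's example almost verbatim. The difference is organizational: the paper peels \emph{all} pairs at once in a single product decomposition and then does a global exponent computation (which produces a residual factor $\prod_i|\boldsymbol{\alpha_i}|^{-8i+8}\le 1$ that must be discarded at the end), whereas you peel one largest pair at a time by induction on $d$, which localizes the bookkeeping to the single identity $d(d-1)-(d-2)(d-3)=4d-6$ and makes the role of the hypothesis $r\ge 1$, $\mathfrak{M}(\boldsymbol{\beta})\ge 1$ transparent. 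Your handling of the base cases is the right fix for the failure of the statement at $d=1$; note only that the exponents of $\mathfrak{M}(\boldsymbol{\beta})$ were dropped from your displayed intermediate inequality (it should carry $\mathfrak{M}(\boldsymbol{\beta})^{2d-7}$ on the left and $\mathfrak{M}(\boldsymbol{\beta})^{2d-3}$ on the right before cancellation, as your surrounding text correctly indicates), and that for $k=1$ the sharpness witness should be taken as $\pm L$ rather than $(L,0),(0,L)$, exactly as in the paper's choice $\boldsymbol{\alpha_1}'=-\boldsymbol{\alpha_1}$.
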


The case $m=0$ corresponds to an arrangement of $d$ points in a closed ball of radius $\radius$ in $\mathbb{R}^k$. In this case our theorem only yields the trivial lower bound 
$d(1-d)\cdot \log(2r)$ for the logarithmic energy. On the other hand it is clear from the proof that good lower bounds for $E_0(d-m;k)$ can be used to refine the constant $(2r)^{d(1-d)}$ in our theorem, at least if $d\geq m+2$. However, in this work we are only concerned with the exponent of the discriminant and for simplicity we have decided to record only the simplest possible explicit constant which is $(2r)^{d(1-d)}$.

\begin{proof}
Without loss of generality, we can relabel the points $\altvectori{1}, \ldots, \altvectori{d}$  in $\vectorlist$ in order of decreasing Euclidean length:
\begin{equation*}
    \vectori{1}, \vectori{1}', \ldots, \vectori{\frac{m}{2}}, \vectori{\frac{m}{2}}', \vectori{\frac{m}{2}+1}, \vectori{\frac{m}{2}+2}, \ldots, \vectori{d-\frac{m}{2}},
\end{equation*} where $|\vectori{i}| = |\vectori{i}'|$ ($i=1,\ldots,\frac{m}{2}$). Note that the points of Euclidean length strictly greater than $\radius$ are exactly the $m$ points $\vectori{1}, \vectori{1}', \ldots, \vectori{\frac{m}{2}}, \vectori{\frac{m}{2}}'$, so
\begin{equation*}
    |\vectori{1}| = |\vectori{1}'| \geq |\vectori{2}| = |\vectori{2}'| \geq \ldots \geq |\vectori{\frac{m}{2}}| = |\vectori{\frac{m}{2}}'| > \radius \geq |\vectori{\frac{m}{2}+1}| \geq \ldots \geq |\vectori{d-\frac{m}{2}}|.
\end{equation*}

If any two points are equal then $\absvecdelta{\vectorlist}=0$, and (\ref{eqn:vecresult}) is trivially true.
Hence, we can assume all of these points are distinct.

Recall that $\vectorlist = (\altvectori{1}, \ldots, \altvectori{d}) = (\vectori{1}, \vectori{1}', \ldots, \vectori{\frac{m}{2}}, \vectori{\frac{m}{2}}', \vectori{\frac{m}{2}+1}, \ldots, \vectori{d-\frac{m}{2}})$, so by definition of the discriminant we have
\begin{alignat}1 \label{eqn:big_product}
\nonumber \absvecdelta{\vectorlist} & = \prod_{1 \leq i < j \leq d}|\altvectori{i} - \altvectori{j}|^{2} \\
 & = \prod_{i=1}^{\frac{m}{2}}\left(|\vectori{i} - \vectori{i}'|^{2} \prod_{\smallervector \in \smallervectorsi{i}}|\vectori{i} - \smallervector|^{2}|\vectori{i}' - \smallervector|^{2}\right) \prod_{\frac{m}{2}+1 \leq i < j \leq d-\frac{m}{2}}|\vectori{i}-\vectori{j}|^{2},
\end{alignat}
where the set $\smallervectorsi{i} := \{\vectori{i+1}, \vectori{i+2}, \ldots, \vectori{d-\frac{m}{2}}, \vectori{i+1}', \vectori{i+2}', \ldots, \vectori{\frac{m}{2}}'\}$.

Let us start by evaluating the right-most product here, which we denote by $\littleconstant$ for convenience\footnote{Of course the sharp upper bound here is 
$\littleconstant\leq r^{(d-m)(d-m-1)}e^{-E_0(d-m;k)}$ where $E_0(d-m;k)$ denotes the minimal logarithmic energy of a $(d-m)$-configuration in the unit ball in $\IR^k$.}. We have $|\vectori{i}|, |\vectori{j}| \leq \radius$ whenever $\frac{m}{2}+1 \leq i < j \leq d-\frac{m}{2}$, so by the triangle inequality for all terms in this product,
\begin{equation*}
    |\vectori{i}-\vectori{j}|^2 \leq (|\vectori{i}| + |\vectori{j}|)^2 \leq (2\radius)^2.
\end{equation*}
Hence,
\begin{equation*}
\begin{split}
\littleconstant & \leq \prod_{m+1 \leq i < j \leq n}(2\radius)^2 = (2r)^{2{d-m \choose 2}} = (2r)^{(d-m)(d-m-1)}.
\end{split}
\end{equation*}
Now let us evaluate the left-most product in (\ref{eqn:big_product}). Noting that $|\vectori{i}| = |\vectori{i}'|$  ($i=1,\ldots,\frac{m}{2}$), we have for all $i=1,\ldots,\frac{m}{2}$,
\begin{equation*}
|\vectori{i} - \vectori{i}'|^{2} \leq (|\vectori{i}| + |\vectori{i}'|)^2 = (2|\vectori{i}|)^2 = 2^2|\vectori{i}|^2.
\end{equation*}
Likewise, noting that $|\smallervector| \leq |\vectori{i}| = |\vectori{i}'|$ for all $\smallervector \in \smallervectorsi{i}$ ($i=1,\ldots,\frac{m}{2}$), we have for all $\smallervector \in \smallervectorsi{i}$ ($i=1,\ldots,\frac{m}{2}$),
\begin{equation*} 
|\vectori{i} - \smallervector|^{2} \leq 2^2|\vectori{i}|^{2},
\end{equation*}
and
\begin{equation*} 
|\vectori{i}' - \smallervector|^{2} \leq 2^2|\vectori{i}|^{2}.
\end{equation*}

Combining this all together, we get
\begin{equation*}
\begin{split}
\absvecdelta{\vectorlist} & \leq \littleconstant \cdot \prod_{i=1}^{\frac{m}{2}}\left(2^2|\vectori{i}|^{2} \prod_{\smallervector\in\smallervectorsi{i}}2^2|\vectori{i}|^{2} \cdot 2^2|\vectori{i}|^{2}\right) 
   = \littleconstant \cdot \prod_{i=1}^{\frac{m}{2}}\left(2^2|\vectori{i}|^{2} \cdot (2^{4}|\vectori{i}|^{4})^{|\smallervectorsi{i}|}\right).
\end{split}
\end{equation*}
Now consider the number of elements in $\smallervectorsi{i}$. This is straightforward to calculate:
\begin{equation*}
    |\smallervectorsi{i}| = (d-\frac{m}{2}-i) + (\frac{m}{2}-i) = d - 2i.
\end{equation*}
So 
\begin{equation*}
\begin{split}
\absvecdelta{\vectorlist} & \leq \littleconstant \cdot \prod_{i=1}^{\frac{m}{2}}\left(2^2|\vectori{i}|^{2} \cdot (2^{4}|\vectori{i}|^{4})^{d-2i}\right) \\
 & = \littleconstant \cdot 2^{m} \cdot 2^{2dm} \cdot \prod_{i=1}^{\frac{m}{2}}\left(2^{-8i}|\vectori{i}|^{4d-8i+2}\right) \\
 & = \littleconstant \cdot 2^{m(2d+1)} \cdot \prod_{i=1}^{\frac{m}{2}}\left(2^{-8i}\right)\prod_{i=1}^{\frac{m}{2}}\left(|\vectori{i}|^{4d-6-8i+8}\right) \\
 & = \littleconstant \cdot 2^{m(2d-m-1)} \prod_{i=1}^{\frac{m}{2}}\left(|\vectori{i}|^{4d-6}\right)\prod_{i=1}^{\frac{m}{2}}\left(|\vectori{i}|^{-8i+8}\right).
\end{split}
\end{equation*}
Let
\begin{equation*}
\vecconstant{d,m} := \littleconstant \cdot 2^{m(2d-m-1)}.
\end{equation*} 
Then we see that
\begin{equation*}
\begin{split}
\vecconstant{d,m} & \leq (2r)^{d^{2}+m^{2}-2dm-d+m} \cdot 2^{m(2d-m-1)}\\
 & = (2r)^{d(d-1)+m(m+1-2d)} \cdot 2^{m(2d-m-1)} \\
 & \leq (2r)^{d(d-1)} \cdot 2^{m(m+1-2d+2d-m-1)} = (2r)^{d(d-1)},
\end{split}
\end{equation*}
where we used that $r\geq 1$ and $m+1-2d \leq d+1-2d \leq 0$.
Hence,
\begin{equation*}
\begin{split}
\absvecdelta{\vectorlist} & \leq (2r)^{d(d-1)} \prod_{i=1}^{\frac{m}{2}}\left(|\vectori{i}|^{4d-6}\right)\prod_{i=1}^{\frac{m}{2}}\left(|\vectori{i}|^{-8i+8}\right) \\
 & = (2r)^{d(d-1)} \prod_{i=1}^{\frac{m}{2}}\left(|\vectori{i}|^{2}\right)^{2d-3}\prod_{i=1}^{\frac{m}{2}}\left(|\vectori{i}|^{-8i+8}\right) \\
 & \leq (2r)^{d(d-1)} \vecmahler{\vectorlist}^{2d-3}\prod_{i=1}^{\frac{m}{2}}\left(|\vectori{i}|^{-8i+8}\right).
\end{split}
\end{equation*}

Consider the remaining product here. We have that $|\vectori{i}| > r \geq 1$ for all $i = 1, \ldots, \frac{m}{2}$; and $-8i+8 \leq 0$ also for all $i = 1, \ldots, \frac{m}{2}$. Hence, $0 < |\vectori{i}|^{-8i+8} \leq 1$ for all $i = 1, \ldots, \frac{m}{2}$, and thus $0 < \prod_{i=1}^{\frac{m}{2}}|\vectori{i}|^{-8i+8} \leq 1$. We conclude that
\begin{equation*}
\absvecdelta{\vectorlist} \leq (2r)^{d(d-1)}\vecmahler{\vectorlist}^{2d-3}
\end{equation*}
which  proves  inequality (\ref{eqn:vecresult}).

Finally, to see that the exponent $2d-3$ is sharp it suffices to consider $d-2$ distinct fixed points on the $(k-1)$-sphere of radius $\radius$ centred at the origin and taking the remaining two points $\vectori{1}$ and $\vectori{1}'=-\vectori{1}$ with arbitrarily large Euclidean distance. This completes the proof of Theorem \ref{thm:mainresultforvecs}.
\end{proof}

\section{Upper bounds for the Mahler measure of minimal polynomials of integral generators}

In this section we prove Proposition \ref{thm:kcomplementtomahlerresult}.
Let $K$ be a number field of degree $d$. Let 
$\sigma_i:K \rightarrow \mathbb{R}$ for $1 \leq i \leq r$ be the $r$ real embeddings, and let
 $\sigma_{i}, \sigma_{i+s}:K \rightarrow \mathbb{C}$  for $r+1 \leq i \leq r+s$ be the $s$ pairs of complex conjugate embeddings, so that $d=r+2s$.
Let
\begin{equation*}
\begin{split}
\boldsymbol{\sigma}:\ & K \rightarrow \mathbb{R}^r \times \mathbb{C}^s \\
& \alpha \mapsto (\sigma_1(\alpha), \ldots, \sigma_r(\alpha), \sigma_{r+1}(\alpha), \ldots, \sigma_{r+s}(\alpha))
\end{split}
\end{equation*}
be the Minkowski embedding of $K$, and set $\Lambda = \boldsymbol{\sigma}O_K \subseteq \mathbb{R}^r \times \mathbb{C}^s$. Identifying $\mathbb{C} \cong R^2$ via the isomorphism $\alpha \mapsto (\text{Re}(\alpha),\text{Im}(\alpha))$ turns $\Lambda$ into a lattice in $\mathbb{R}^d$. 
For the convenience of the reader we recall Proposition \ref{thm:kcomplementtomahlerresult}.
\begin{proposition} \label{thm:kcomplementtomahlerresultver2}
Let $K$ be a number field of degree $d$ with a non-real embedding (i.e. $s \geq 1)$. Then there exists $c_K > 0$ depending only on $K$ such that there are infinitely many $\alpha \in \Oseen_K$ with $K = \mathbb{Q}(\alpha)$,
\begin{equation*}
    M(f_{\alpha,\mathbb{Z}})^{2d-3} \leq c_K|\Delta_{f_{\alpha,\mathbb{Z}}}|,
\end{equation*}
and all roots of $f_{\alpha,\mathbb{Z}}$, except one pair of complex conjugate roots, have modulus at most $c_K$.
\end{proposition}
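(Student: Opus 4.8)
The plan is to realise the sharp configuration from the proof of Theorem~\ref{thm:mainresultforvecs} inside the Minkowski lattice $\Lambda=\boldsymbol\sigma\Oseen_K$: I seek integral generators $\alpha$ one of whose conjugate pairs $\sigma_{r+1}(\alpha),\overline{\sigma_{r+1}(\alpha)}$ is pushed far out along the imaginary axis while the remaining $d-2$ conjugates stay bounded and pairwise separated. Since $\mathbb{Q}(\alpha)=K$ holds precisely when the $d$ conjugates of $\alpha$ are distinct (equivalently $\Delta_{f_{\alpha,\mathbb Z}}\neq0$), arranging distinct conjugates will simultaneously secure the generator condition. Throughout I assume $d\geq 3$; the case $d=2$ is exactly Ruppert's imaginary--quadratic observation \eqref{eqn:imaginarybound} and is handled directly.

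First I would let $\pi'\colon\mathbb R^d\to\mathbb R^{d-1}$ be the coordinate projection forgetting $\mathrm{Im}\,\sigma_{r+1}$ and check that $\pi'|_\Lambda$ is injective: a kernel element $\alpha\in\Oseen_K$ has $\sigma_i(\alpha)=0$ for every fully retained embedding, and since $d\geq 3$ (with $s\geq1$) forces $r\geq 1$ or $s\geq 2$, at least one full embedding is retained, whence $\alpha=0$. Consequently $\pi'(\Lambda)\cong\mathbb Z^{d}$ is a rank-$d$ subgroup of $\mathbb R^{d-1}$, so it cannot be discrete; its closure is a closed subgroup $H=V\oplus\Gamma$ with $\dim V\geq 1$. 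Because $\pi'(\Lambda)$ is dense in $H$ and $H$ has no isolated points, every point of $H$ is the limit of infinitely many distinct points of $\pi'(\Lambda)$.

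Next I would pin down a good target. Fixing any generator $\beta\in\Oseen_K$ of $K$, its image $\pi'(\beta)$ avoids the coincidence locus $B\subset\mathbb R^{d-1}$, the finite union of proper affine subspaces on which two of the $d-2$ bounded roots agree, because the conjugates of a generator are distinct. As $\pi'(\beta)+V\subseteq H$ is a positive-dimensional affine subspace through a point outside $B$, it is not contained in $B$, so I can choose $p^\ast\in(\pi'(\beta)+V)\setminus B\subseteq H\setminus B$. Taking distinct $\alpha_n\in\Oseen_K$ with $\boldsymbol\sigma(\alpha_n)\to p^\ast$ under $\pi'$, the discreteness of $\Lambda$ forces $R_n:=|\mathrm{Im}\,\sigma_{r+1}(\alpha_n)|\to\infty$, while all other conjugates converge to the distinct finite values encoded by $p^\ast$. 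Thus for all large $n$ the conjugates of $\alpha_n$ are distinct (so $\mathbb Q(\alpha_n)=K$), the $d-2$ non-$\sigma_{r+1}$ roots have modulus $\leq c_0$ and pairwise distance $\geq\delta>0$ for fixed $c_0,\delta$, and the pair $\sigma_{r+1}(\alpha_n),\overline{\sigma_{r+1}(\alpha_n)}$ has modulus $\asymp R_n$ and mutual distance $2R_n$.

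Finally I would read off the two sizes. The bounded roots contribute a factor in $[1,c_0^{\,d-2}]$ to $M(f_{\alpha_n,\mathbb Z})$, so $M(f_{\alpha_n,\mathbb Z})\asymp R_n^{2}$ and $M(f_{\alpha_n,\mathbb Z})^{2d-3}\asymp R_n^{4d-6}$; while in $\Delta_{f_{\alpha_n,\mathbb Z}}=\prod_{i<j}|\rho_i-\rho_j|^2$ the factor $|\sigma_{r+1}-\overline{\sigma_{r+1}}|^2=4R_n^2$, the $2(d-2)$ cross factors $|\text{large}-\text{bounded}|^2\asymp R_n^2$, and the $\binom{d-2}{2}$ bounded factors $\geq\delta^2$ together give $|\Delta_{f_{\alpha_n,\mathbb Z}}|\geq c'R_n^{4d-6}$ for some $c'>0$. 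Comparing the two yields $M(f_{\alpha_n,\mathbb Z})^{2d-3}\leq c_K|\Delta_{f_{\alpha_n,\mathbb Z}}|$ with $c_K$ depending only on $K$ (enlarged so that $c_K\geq c_0$ also delivers the modulus claim), for infinitely many $n$. The main obstacle is the middle step: one must ensure the bounded conjugates do not all collapse together, for otherwise the $\binom{d-2}{2}$ bounded difference factors in the discriminant tend to $0$ and the inequality breaks down — which is precisely why the target $p^\ast$ is chosen off the coincidence locus $B$ rather than, say, at the origin.
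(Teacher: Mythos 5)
Your argument is correct, but it reaches the required configuration by a genuinely different route from the paper. The paper is fully constructive: for each large $T$ it builds an explicit axis-parallel box $S_{c,T}\subseteq\mathbb{R}^r\times\mathbb{C}^s$ of side length $c$, placing the first $r+s-1$ coordinates in pairwise well-separated cells near the origin and the last complex coordinate near $T+i(T+3c)$; once $c$ exceeds a covering constant $c_\Lambda$ of the Minkowski lattice, every such box contains a lattice point, and the built-in separation $|\sigma_i(\alpha)-\sigma_j(\alpha)|>c$ simultaneously forces $\mathbb{Q}(\alpha)=K$ and gives the lower bound $|\Delta_{f_{\alpha,\mathbb{Z}}}|\geq c^{(d-3)(d-2)}T^{2(2d-3)}$ directly. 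You instead project $\Lambda$ to $\mathbb{R}^{d-1}$ by forgetting $\mathrm{Im}\,\sigma_{r+1}$, note that the image is a rank-$d$ and hence non-discrete subgroup, and approximate a target off the coincidence locus by infinitely many distinct lattice projections, so that discreteness of $\Lambda$ forces the forgotten coordinate to blow up; the injectivity check, the choice of $p^\ast$ via a fixed generator $\beta$, and the final comparison $M^{2d-3}\asymp R_n^{4d-6}\ll|\Delta|$ are all sound. The paper's box yields an explicit constant $c_K=(2dc_\Lambda)^{d(2d-3)}$ and treats all $d\geq2$ uniformly, whereas your closed-subgroup argument is softer (the constant depends on the chosen target and the resulting separation $\delta$) but avoids designing and verifying the box, at the cost of splitting off $d=2$. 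One small slip there: the relevant fact for $d=2$ is the identity \eqref{quarter} with $b=0$ (giving $M(f_{\alpha,\mathbb{Z}})\leq\tfrac14|\Delta_{f_{\alpha,\mathbb{Z}}}|$ for purely imaginary integral $\alpha$), not the lower bound \eqref{eqn:imaginarybound}, which points in the opposite direction; the case remains trivial either way.
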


\begin{proof}
For real $c, T\geq 1$ we define the subset $S_{c,T}$ of $\mathbb{R}^r \times \mathbb{C}^s$ as the set of points ${\bf x} = (x_1, \ldots, x_{r+s}) \in \mathbb{R}^r \times \mathbb{C}^s$ such that
\begin{equation*}
\begin{cases}
        (2i-2)c < x_i < (2i-1)c \quad &\text{for} \ i = 1, \ldots, r \\
        (2i-2)c < \text{Re}(x_i) < (2i-1)c \quad &\text{for} \ i = r+1, \ldots, r+s-1 \\
        c < \text{Im}(x_i) < 2c \quad &\text{for} \ i = r+1, \ldots, r+s-1 \\
        T <  \text{Re}(x_{r+s}) < T+c \\
        3c+T <  \text{Im}(x_{r+s}) < T+4c. \\
     \end{cases}
\end{equation*}
If we identify $\mathbb{R}^r \times \mathbb{C}^s$ with $\mathbb{R}^d$ like before, then $S_{c,T}$ is a box with sides parallel to the axes, each of length $c$. Therefore, there exists $c_{\Lambda}$ such that $S_{c,T}$ contains a point of the lattice $\Lambda$ whenever $c \geq c_{\Lambda}$.

Furthermore, for all ${\bf x} = (x_1, \ldots, x_{r+s}) \in S_{c,T}$,
\begin{equation*}
    |x_i - x_j| > c \quad (1 \leq i < j \leq r+s).
\end{equation*}
This implies that for $\alpha \in K$ and $\boldsymbol{\sigma}(\alpha) \in S_{c,T}$,
\begin{equation} \label{greater_than_c}
    |\sigma_i(\alpha) - \sigma_j(\alpha)| > c \quad (1 \leq i < j \leq r+s).
\end{equation}
If $r+s+1 \leq i < j \leq r+2s$, then $|\sigma_i(\alpha) - \sigma_j(\alpha)| = |\overline{\sigma_{i-s}(\alpha) - \sigma_{j-s}(\alpha)}| = |\sigma_{i-s}(\alpha) - \sigma_{j-s}(\alpha)|> c$, 
by (\ref{greater_than_c}). If $1 \leq i \leq r+s$ and $j \geq r+s+1$ then
\begin{equation*}
|\sigma_i(\alpha) - \sigma_j(\alpha)|  \geq |\text{Im}(\sigma_i(\alpha)) - \text{Im}(\sigma_j(\alpha))| > |0 - (-c)|  = c.
\end{equation*}
We have shown that if $\alpha \in K$ and $\boldsymbol{\sigma}(\alpha) \in S_{c,T}$ then
\begin{equation} \label{sigma_ij_big_result}
    |\sigma_i(\alpha) - \sigma_j({\alpha})| > c
\end{equation}
whenever $1 \leq i < j \leq d$.

Take $c=c_{\Lambda} \geq 1$ as before. Then there exists $\alpha \in \Oseen_K$ such that $\boldsymbol{\sigma}(\alpha) \in \Lambda \cap S_{c,T}$. Let $f_{\alpha,\mathbb{Z}} \in \mathbb{Z}[x]$ be the minimal polynomial of $\alpha$.
Now (\ref{sigma_ij_big_result}) implies that $\sigma_i(\alpha)\neq \sigma_j(\alpha)$ for all  $1\leq i<j\leq d$. Hence, we conclude $\mathbb{Q}(\alpha) = K$.
Finally, using (\ref{sigma_ij_big_result}) we get
\begin{equation*}
\begin{split}
|\Delta_{f_{\alpha,\mathbb{Z}}}| &= \prod_{i \neq j}|\sigma_i(\alpha)-\sigma_j(\alpha)| \\
 &= \prod_{\substack{i \neq j \\ i,j \neq r+s, r+2s}}|\sigma_i(\alpha)-\sigma_j(\alpha)| \cdot \prod_{i \neq r+s}|\sigma_i(\alpha)-\sigma_{r+s}(\alpha)|^2 \cdot \prod_{\substack{i \neq r+2s \\ i \neq r+s}}|\sigma_i(\alpha)-\sigma_{r+2s}(\alpha)|^2 \\
 &\geq c^{(d-3)(d-2)} \cdot T^{2(d-1)} \cdot T^{2(d-2)} \\
 &=c^{(d-3)(d-2)} \cdot T^{2(2d-3)}.
\end{split}
\end{equation*}
On the other hand, using $|z| \leq |\text{Re}(z)| + |\text{Im}(z)|$ for $z \in \mathbb{C}$,
\begin{alignat}1\label{eqn:modulusproperty}
\nonumber M(f_{\alpha,\mathbb{Z}}) &= \prod_{i=1}^{d}\text{max}\{1,|\sigma_i(\alpha)|\} \\
\nonumber  &\leq (\left(2(r+s-1)+1)c\right)^{d-2}(2T+5c)^2 \\
 &\leq (2dc)^{d-2}(3T)^2 \\
\nonumber  &\leq (2dc)^dT^2,
\end{alignat}
provided $T \geq 5c$.
Hence,
\begin{equation*} 
\begin{split}
M(f_{\alpha,\mathbb{Z}})^{2d-3} &\leq (2dc)^{d(2d-3)}c^{-(d-3)(d-2)}|\Delta_{f_{\alpha,\mathbb{Z}}}| \\
 &\leq (2dc)^{d(2d-3)}|\Delta_{f_{\alpha,\mathbb{Z}}}| \\
 &= c_K|\Delta_{f_{\alpha,\mathbb{Z}}}|,
\end{split}
\end{equation*}
where $c_K:=(2dc)^{d(2d-3)}$ depends only on $K$, as $c_{\Lambda}$ depends only on $\boldsymbol{\sigma}\Oseen_K = \Lambda$.

Choosing a sequence of $T$'s, say $T_1,T_2,T_3,\ldots$ with $T_{i+1} > T_i + c$, yields a sequence of disjoint boxes $S_{c,T_i}$, each box containing an admissible $\alpha_i \in \Oseen_K$ with $K=\mathbb{Q}(\alpha_i)$ and $M(f_{\alpha_i,\mathbb{Z}})^{2d-3} \leq c_K|\Delta(f_{\alpha_i,\mathbb{Z}})|$. Moreover, as observed in (\ref{eqn:modulusproperty}), all roots of $f_{\alpha_i,\mathbb{Z}}$ have modulus at most $2dc \leq c_K$ except one pair of complex conjugate roots. This proves the theorem.
\end{proof}

It is clear that the proof of Proposition \ref{thm:kcomplementtomahlerresultver2} can be adapted to show that for any given number field $K$ of degree $d\geq 2$
with at least one real embedding there exist infinitely many $\alpha \in \Oseen_K$ with $K = \mathbb{Q}(\alpha)$, 
\begin{equation*}
    M(f_{\alpha,\mathbb{Z}})^{2d-2} \leq c_K|\Delta_{f_{\alpha,\mathbb{Z}}}|,
\end{equation*}
and all roots of $f_{\alpha,\mathbb{Z}}$, except one,  have modulus at most $c_K$.

\end{document}